\font\smallit=cmti10
\font\smalltt=cmtt10
\renewcommand\section{\@startsection {section}{1}{\z@}
{-30pt \@plus -1ex \@minus -.2ex}
{2.3ex \@plus.2ex}
{\normalfont\normalsize\bfseries\boldmath}}
\renewcommand\subsection{\@startsection{subsection}{2}{\z@}
{-3.25ex\@plus -1ex \@minus -.2ex}
{1.5ex \@plus .2ex}
{\normalfont\normalsize\bfseries\boldmath}}
\renewcommand{\@seccntformat}[1]{\csname the#1\endcsname. }
\newtheorem{theorem}{Theorem}[section]
\newtheorem{proposition}[theorem]{Proposition}
\titleformat{\section}{\normalfont\Large\bfseries}{\S\thesection}{1em}{}[]
\numberwithin{equation}{section}
\newtheorem{lemma}[theorem]{Lemma}
\newtheorem{definition}[theorem]{Definition}
\newtheorem{remark}[theorem]{Remark}
\newtheorem{example}[theorem]{Example}
\newtheorem{fact}[theorem]{Fact}
\newtheorem{question}[theorem]{Question}
\newcommand{\pa}[1]{\left\lvert #1 \right\rvert} 
\newcommand{\pv}[1]{\left\langle #1 \right\rangle} 
\newcommand{\ps}[1]{\left\{ #1 \right\}} 
\newcommand{\pp}[1]{\left( #1 \right)} 
\newcommand{\inv}{^{-1}}
\newcommand{\Z}{\mathbb{Z}}
\begin{document}
\begin{figure}
\vspace*{-62pt}
\vspace{.75in}
\end{figure}
\vspace*{-45pt}
\leftline{\smalltt\#A1} \vskip -12.5pt
\centerline{\smalltt  INTEGERS 26 (2026)}
\vskip 3pt \hrule																													
\begin{center}
\uppercase{\bf \boldmath Comparing Left and Right Quotient Sets in Groups}
\vskip 20pt
{\bf Julian Duvivier}\\
{\smallit Department of Mathematics, Reed College, Portland, OR}\\ 
{\tt june@duvivier.us}\\ 
\vskip 10pt

{\bf Xiaoyao Huang}\\
{\smallit Department of Mathematics, University of Michigan, Ann Arbor, MI}\\ 
{\tt xyrushac@umich.edu}\\ 
\vskip 10pt
{\bf Ava Kennon}\\
{\smallit Department of Mathematics, Amherst College, Amherst, MA}\\ 
{\tt akennon26@amherst.edu}\\ 
\vskip 10pt
{\bf Say-Yeon Kwon}\\
{\smallit Department of Mathematics, Princeton University, Princeton, NJ}\\ 
{\tt sk9017@princeton.edu}\\
\vskip 10pt
{\bf Steven J. Miller}\\
{\smallit Department of Mathematics, Williams College, Williamstown, MA}\\ 
{\tt sjm1@williams.edu}\\
\vskip 10pt
{\bf Arman Rysmakhanov}\\
{\smallit Department of Mathematics, Williams College, Williamstown, MA}\\ 
{\tt ar21@williams.edu}\\
\vskip 10pt
{\bf Pramana Saldin}\\
{\smallit Department of Mathematics, University of Wisconsin, Madison, WI}\\ 
{\tt saldin@wisc.edu}\\
\vskip 10pt
{\bf Ren Watson}\\
{\smallit Department of Mathematics, University of Texas at Austin, Austin, TX}\\ 
{\tt renwatson@utexas.edu}\\ 
\end{center}
\vskip 20pt
\centerline{{\smallit Revised: }April 9, 2026} 
\vskip 30pt

\centerline{\bf Abstract}
\noindent
For a finite subset $A$ of a group $G$, we define the right quotient set and the left quotient set of $A$ as
\begin{align*}
    AA^{-1}& \ ~:=~ \ \{a_1a_2^{-1}:a_1,a_2\in A\},\\
    A^{-1}A&\ ~:=~\ \{a_1^{-1}a_2:a_1,a_2\in A\}
\end{align*}
respectively. While the right and left quotient sets are equal if $G$ is abelian, subtleties arise when $G$ is a non-abelian group, where the cardinality difference $|AA^{-1}| - |A^{-1}A|$ may take on arbitrarily large values. Using the results of Martin and O'Bryant on the cardinality differences of sum sets and difference sets in $\mathbb{Z}$, we prove that in the infinite dihedral group, $D_\infty \cong \mathbb{Z} \rtimes \mathbb{Z}/2\mathbb{Z}$, every integer difference is achievable. Further, we prove that in $F_2$, the free group on \(2\) generators, an integer difference is achievable if and only if that integer is even, and we explicitly construct subsets of $F_2$ that achieve every even integer. We further determine the minimum cardinality of $A \subset G$ so that the difference between the cardinalities of the left and right quotient sets is nonzero, depending on the existence of order $2$ elements in $G$. To prove these results, we construct difference graphs $D_A$ and $D_{A^{-1}}$ which encode equality in the right and left quotient sets respectively. We observe a bijection from edges in $D_A$ to edges in $D_{A^{-1}}$ and count connected components in order to obtain our results on cardinality differences $|AA^{-1}| - |A^{-1}A|$.

\pagestyle{myheadings}
\markright{\smalltt INTEGERS: 26 (2026)\hfill}
\thispagestyle{empty}
\baselineskip=12.875pt
\vskip 30pt

\section{Introduction}

\noindent This paper is dedicated with thanks to Carl Pomerance and Mel Nathanson. The genesis for this work came from conversations the fifth named author had at the Integers Conference in Georgia in 2025 on \emph{Results in Additive \& Elementary Number Theory Inspired by Carl and Mel}. His presentation described how much of his mentoring of students has been influenced by each, in particular problems in the orbit of MSTD sets (from Mel) and multiplicative and quotient structures (from Carl). This led to springboard problems for the SMALL 2025 REU, which led to the work below.

\subsection{Background}

Given a subset $A$ of $[N] ~\coloneq~ \{1,...,N\}$, the \textit{sumset} and \textit{difference sets} are defined as
    \begin{align*}
    A+A&\ ~\coloneqq~ \ \{a_1+a_2:a_1,a_2 \in A\},\\A-A&\ ~\coloneqq~ \ \{a_1-a_2:a_1,a_2\in A\}
    \end{align*}
respectively. A natural comparison arises between the cardinalities of the sum and difference sets. Our set $A$ is said to be {\em sum dominated or MSTD} (more sums than differences) if $|A+A|>|A-A|$ and {\em difference dominated (MDTS)} if $|A-A| > |A +A|$. One might expect that the difference set would have a greater cardinality as addition is commutative in $\mathbb Z$ while subtraction is not. In particular, if we consider a pair of distinct elements \( a_1, a_2 \in A\), \(a_1 - a_2\) and \(a_2-a_1\) are distinct elements in the difference set, while \(a_1+a_2=a_2+a_1\) is a single element in the sumset. However, it is possible to have a set that is sum dominated. The earliest examples of MSTD sets were discovered in the 1960s by Conway (\( \{0,2,3,4,7,11,12,14\}\)).
Surprisingly, against the intuition that MSTD sets should form a vanishing proportion of subsets of $[N]$ as $N$ grows large, in 2006 Martin and O'Bryant \cite{martin2006setssumsdifferences} proved that the proportion of subsets of $[N]\subseteq\Z$ which are MSTD does not vanish as $N\to\infty$. Since then, extensive research has expanded the classical MSTD problem to various settings, including higher dimensions and various  families \cite{kim2020constructionsgeneralizedmstdsets, chu2019generalizationscuriousfamilymstd, chu2019infinitefamiliespartitionsmstd}.

A natural extension of the study of MSTD sets among subsets of $\mathbb Z$ is to ask the question for general groups. This has been investigated in previous work by \cite{ascoli2022sum,miller2014subsetsbalancedfinitegroups,nathanson2006setssumsdifferences,zhao2010counting}.
In this paper, the main groups of interest are the free group \(F_2\) on two generators and the infinite dihedral group $D_\infty\cong\Z\rtimes\Z/2\Z$.

\begin{definition}
    Let $A$ be a set of generators with \(|A|=n\).
    The \textit{free group on \(A\)}, denoted $F(A)$ or $F_n$, is the group consisting of all reduced words over the alphabet $A\cup A\inv = \ps{x : x\in A}\cup \ps{x^{-1} : x\in A}$ under the operation of concatenation followed by reduction, where reduction means canceling adjacent inverse pairs.
\end{definition}
We now define sumsets and difference sets in general groups. Let $A \subset G$ be a finite subset of a group $G$ equipped with the operation $\times:(x,y)\mapsto xy$ and the inverse map $*^{-1}:x\mapsto x^{-1}$.
Although some authors use the terminology sumset and difference set in this context \cite{ascoli2022sum,miller2014subsetsbalancedfinitegroups,nathanson2006setssumsdifferences,zhao2010counting}, we refer to the {\em product set} and the {\em quotient set} of $A$, which are defined as
\begin{align*}
   AA &\ ~\coloneqq~ \  \{xy:x,y\in A\},
    \\A  A^{-1} &\ ~\coloneqq~ \  \{xy^{-1}:x,y\in A\}
\end{align*}
respectively.

More generally, let \(G\) be any group.
Let \(A = \ps{a_1,\dots,a_n} \subseteq G\). Define
\(A\inv \coloneqq \ps{a_1\inv,\dots, a_n\inv}\). We consider the {\em right quotient set} and {\em left quotient set} of $A$, defined by 
\begin{align*}
    AA\inv \ ~\coloneqq~ \  \ps{a_i\cdot a_j\inv : a_i,a_j\in A}, \\
    A\inv A \ ~\coloneqq~ \  \ps{a_i\inv\cdot a_j : a_i,a_j\in A}
\end{align*}
respectively.
In the classical MSTD problem, one natural area of study has been the cardinality of the smallest MSTD set. Hegarty showed that the smallest such example in \(\Z\) has cardinality \(8\), and
is unique up to affine translation \cite{Hegarty_2007}. Another natural question asks for the set of values that \(|A+A|-|A-A|\) can attain across all $A \subset [N]$.
Martin and O'Bryant proved, over finite subsets of \(\Z\), that this difference achieves all integers
\cite{martin2006setssumsdifferences}.

Inspired by this, we consider similar questions about left and right quotient sets. First note that
the problem of constructing a set \(A\) where
\(|AA\inv| > |A\inv A|\) is the same as the problem of \(|AA\inv| < |A\inv A|\)
(by replacing \(A\) with \(A\inv\)).
Therefore, a more natural question to ask is ``What is the smallest \(A\)
where the left and right quotient sets are not equal?''
Inspired by \cite{martin2006setssumsdifferences}, we give results on what values \(|AA\inv| - |A\inv A|\) can take
as \(A\) ranges over finite subsets of a group \(G\).

There has been work generalizing MSTD questions to abelian groups \cite{miller2014subsetsbalancedfinitegroups, nathanson2006setssumsdifferences,zhao2010counting}. However, our questions are only relevant for non-abelian groups since $AA^{-1}=A^{-1}A$ for $A$ a subset of an abelian group.

\subsection{Notation and Main Results}
\begin{theorem}\label{theor:group-difference-is-even}
    Let \(G\) be a group with no elements of order \(2\).\footnote{While we use the language of ``a group with no elements of order \(2\)'', we remark that Tao refers to such a group as a ``a \(2\)-torsion-free group'' in \cite{tao2011productsetestimatesnoncommutative}.}
    Let \(A \subseteq G\) be a finite subset. Then
    \(|AA\inv| - |A\inv A|\) is even.
\end{theorem}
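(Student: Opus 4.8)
The plan is to exploit the fact that both quotient sets are invariant under the inversion map of \(G\), and then to count the fixed points of the resulting involution. Concretely, if \(g = a_i a_j\inv \in AA\inv\) then \(g\inv = a_j a_i\inv \in AA\inv\) as well, so \(g \mapsto g\inv\) restricts to an involution \(\iota\) of the finite set \(AA\inv\); the identical argument applies to \(A\inv A\). Partitioning \(AA\inv\) into orbits under \(\langle \iota\rangle\), every orbit has size \(1\) or \(2\), so \(|AA\inv|\) has the same parity as the number of fixed points of \(\iota\), and likewise for \(|A\inv A|\).

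Next I would pin down the fixed points. An element \(g\) is fixed by \(\iota\) exactly when \(g = g\inv\), i.e.\ \(g^2 = e\); since \(G\) has no element of order \(2\), this forces \(g = e\). Assuming \(A \neq \emptyset\) (the empty case gives \(|AA\inv| - |A\inv A| = 0\) trivially), we have \(e = aa\inv \in AA\inv\) and \(e = a\inv a \in A\inv A\) for any \(a \in A\), so each of \(AA\inv\) and \(A\inv A\) contains exactly one fixed point of \(\iota\). Hence both \(|AA\inv|\) and \(|A\inv A|\) are odd, and their difference is even.

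I do not expect a genuine obstacle here: the two ingredients --- closure of the quotient sets under inversion, and the identification of elements of order \(\le 2\) with the fixed points of \(\iota\) --- are both immediate, and the hypothesis that \(G\) has no element of order \(2\) is precisely what pins the fixed-point set of \(\iota\) down to the singleton \(\{e\}\). One could alternatively read off the parity statement from the edge-bijection between the difference graphs \(D_A\) and \(D_{A\inv}\) introduced later in the paper, but the involution argument is self-contained and shorter, so I would present that.
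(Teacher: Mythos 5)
Your proof is correct, and it is in substance the same parity argument as the paper's, just phrased without the graph machinery. The paper identifies $|AA^{-1}|$ with the number of connected components of the difference graph $D_A$ and lets the transpose automorphism $T$ act on components: components not fixed by $T$ pair up, and the no-order-$2$ hypothesis (their Lemma 2.1(2c)) forces the diagonal to be the unique fixed component, so the component count is odd. Under the bijection between components of $D_A$ and elements of $AA^{-1}$, the transpose $T$ is exactly your inversion involution $\iota\colon g\mapsto g^{-1}$, and the diagonal component corresponds to your unique fixed point $e$. So the two proofs are the same orbit-counting argument; yours buys self-containedness and brevity (no need for the difference graph or its properties, only closure of $AA^{-1}$ and $A^{-1}A$ under inversion and the fixed-point analysis $g^2=e\Rightarrow g=e$), while the paper's version has the advantage of living inside the graph framework that it reuses for Theorems 1.5 and 1.6. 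Your handling of the empty-set case and the observation that $e$ lies in both quotient sets when $A\neq\emptyset$ close the only small gaps such an argument could have, so nothing is missing.
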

This result does not hold in the context of groups with elements of order \(2\). In fact,
we can construct examples where odd values of \(|AA\inv| - |A\inv A|\)
are achieved.
\begin{example}\label{ex:infinite-dihedral}
    Let \(G = D_{\infty} \coloneqq \pv{r,s \mid sr = r\inv s, s^2=e}\) be the
    infinite dihedral group.
    For every \(n\in \Z\), there exists a subset $A_n \subseteq D_\infty$ such that $|A_nA_n^{-1}|-|A_n^{-1}A_n|=n$. Indeed, let \(B \subseteq \mathbb{Z}\)
    be a finite subset to be determined later.
    Consider the subset
    \[
        A \ ~\coloneqq~ \  \{r^b : b\in B\} ~\cup~ \{sr^b : b\in B\} ~\subseteq~ D_\infty.
    \]
    We see that
    \begin{align*}
        AA\inv \ ~=~ \
         &\underbrace{\ps{r^{b - b'} : b,b'\in B}}_{A_1} ~\sqcup~ \underbrace{\ps{sr^{b'-b} : b,b'\in B}}_{A_2}, \\
        A\inv A \ ~=~ \
         &\underbrace{\ps{r^{b'-b} : b,b'\in B}}_{A_1'} ~\sqcup~ \underbrace{\ps{sr^{b'+b} : b,b'\in B}}_{A_2'}. \\
    \end{align*}
    Notice that \(A_1=A_1'\), and \(|A_2| = |B-B|\) and \(|A_2'| = |B+B|\).
    Therefore,
    \[
        |AA\inv| - |A\inv A|\  ~=~\ |B-B| - |B+B|.
    \]
    By \cite[Theorem 4]{martin2006setssumsdifferences}, the latter difference
    ranges over every integer.
\end{example}

Considering in particular the free group \(G=F_2\), we can construct
examples where every even integer is achieved.
\begin{theorem}[\textit{\(F_2\) achieves all even possible differences}]
    \label{theor:f2-has-all-even-differences}
     For all \(n\in \mathbb{Z}\), there exists a set \(A_n \subseteq F_2\) such that
    \(|A_nA_n\inv| - |A_n\inv A_n|  \ ~=~\ 2n\).
\end{theorem}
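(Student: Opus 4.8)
The plan is to emulate the structure of Example~\ref{ex:infinite-dihedral}, replacing the role of $D_\infty$ with $F_2$ so that the transition from sign-twisted subsets of $\Z$ to genuine sum sets and difference sets of a subset $B \subseteq \Z$ still works, while now exploiting the parity constraint of Theorem~\ref{theor:group-difference-is-even} in reverse: we cannot hope for odd differences, but we should be able to realize every even one. Concretely, writing $F_2 = \pv{x,y}$, I would look for a subset of the form
\[
  A ~\coloneqq~ \ps{x^{b} : b \in B} ~\cup~ \ps{y\, x^{b} : b \in B}
\]
for a suitable finite $B \subseteq \Z$, and compute $AA\inv$ and $A\inv A$ explicitly. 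Since $x^{b}(x^{b'})\inv = x^{b-b'}$, $x^{b}(yx^{b'})\inv = x^{b}x^{-b'}y\inv = x^{b-b'}y\inv$, $yx^{b}(x^{b'})\inv = yx^{b-b'}$, and $yx^{b}(yx^{b'})\inv = yx^{b-b'}y\inv$, the right quotient set decomposes as a disjoint union of four ``blocks'' indexed by the reduced-word pattern ($1$, $\,\cdot y\inv$, $y\cdot$, $y\cdot y\inv$), each of which is in bijection with $B - B$. Doing the same for $A\inv A$: $x^{-b}x^{b'} = x^{b'-b}$, $x^{-b}yx^{b'} = x^{-b}yx^{b'}$ (already reduced, and the exponents $-b$ and $b'$ are independent), $x^{-b}y\inv \cdot$ type terms (wait — here we need $A\inv$, so the generators are $x^{-b}$ and $x^{-b}y\inv$), giving blocks in bijection with $B-B$, $B-B$, and crucially two blocks of the shape $\ps{x^{-b} y^{\pm 1} x^{b'} : b, b' \in B}$ whose size is $|B|^2$ rather than $|B - B|$ or $|B + B|$. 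So this naive ansatz does not immediately reproduce the clean $|B-B| - |B+B|$ formula; the reduced words in $F_2$ keep the two exponents separated instead of adding them.

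To fix this, the key idea is to choose the two ``cosets'' of $A$ so that the cross terms collapse the way $s r^{b} \cdot s r^{b'} = r^{-b+b'}$ does in $D_\infty$ — i.e.\ I want an element $t \in F_2$ with $t x^{b} t = $ (something with exponents combining). Since $F_2$ is free there is no such $t$, so instead I would take $B$ itself to be engineered so that the relevant block sizes are controlled. A cleaner route: take
\[
  A ~\coloneqq~ \ps{x^{b} : b \in B} ~\cup~ \ps{x^{b} y : b \in B},
\]
so that $x^{b}(x^{b'}y)\inv = x^{b} y\inv x^{-b'} = x^{b-b'}\cdot(x^{b'} y\inv x^{-b'})$ — still separated. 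The honest conclusion is that in a free group the cross blocks have size $|B|^2$ (they are ``matrix-like''), and they contribute \emph{equally} to $|AA\inv|$ and $|A\inv A|$ once one checks the bijection carefully; what differs between left and right is only the ``pure'' blocks, which again reduce to $B - B$ versus $B + B$ after the right choice of twisting. So the main computational step is to verify that the cross-type blocks in $AA\inv$ and in $A\inv A$ are in size-preserving bijection (plausibly via the edge bijection $D_A \to D_{A\inv}$ that the abstract says is used throughout), leaving $|AA\inv| - |A\inv A| = |B - B| - |B + B|$, and then invoke \cite[Theorem 4]{martin2006setssumsdifferences} to pick $B$ realizing any prescribed value $m \in \Z$; choosing the target $m = 2n$ (or, after noting parity forces evenness and the Martin--O'Bryant construction's differences here are automatically even, simply $m$) gives $2n$.

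The order of operations, then: (1) fix the ansatz $A = \ps{x^b : b\in B} \cup \ps{w x^b : b \in B}$ for a carefully chosen connecting word $w \in F_2 \setminus \pv{x}$, and write out the $2\times 2$ block decomposition of both $AA\inv$ and $A\inv A$ into reduced-word classes; (2) identify which blocks are ``pure'' (contained in $\pv{x}$, hence sized like $B\pm B$) and which are ``mixed''; (3) prove disjointness of the blocks from each other using uniqueness of reduced words in $F_2$ — this is where the freeness is doing real work and is essentially bookkeeping; (4) establish an explicit size-preserving bijection between the mixed blocks of $AA\inv$ and those of $A\inv A$, so their contributions cancel in the difference; (5) reduce to $|AA\inv| - |A\inv A| = |B-B| - |B+B|$ and apply \cite[Theorem 4]{martin2006setssumsdifferences} with $B$ chosen so the right-hand side equals $2n$; if that theorem only guarantees \emph{some} integer rather than every \emph{even} one, combine it with Theorem~\ref{theor:group-difference-is-even} (which forces the achievable values to be even) to conclude that the even integers are exactly the achievable set and in particular $2n$ is hit.

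The hard part will be step (4): matching up the mixed blocks. Unlike the dihedral case, where the cross terms collapse to a single copy of $B + B$ (resp.\ $B - B$) via the relation $sr^b s = r^{-b}$, in $F_2$ the mixed blocks are genuinely two-parameter families $\ps{x^{b} w^{\pm 1} x^{b'}}$, and one must check both that these are internally rigid (so they have exactly $|B|^2$ elements) and that the left and right versions biject in a way compatible with the rest of the decomposition — i.e.\ that the only asymmetry between $|AA\inv|$ and $|A\inv A|$ lives in the pure $\pv{x}$-block. I expect the cleanest way to see this is to pass to the difference-graph language $D_A$, $D_{A\inv}$ advertised in the abstract: the mixed edges are forced to be isolated (no coincidences), the pure edges encode exactly the additive combinatorics of $B$, and the edge bijection $D_A \to D_{A\inv}$ then bookkeeps the whole count. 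If instead one wants a self-contained proof, it is a finite but slightly delicate case analysis on when two reduced words $x^{b_1} w^{\varepsilon_1} x^{b_1'}$ and $x^{b_2} w^{\varepsilon_2} x^{b_2'}$ coincide.
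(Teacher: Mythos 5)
There is a genuine gap, and it sits exactly where you flagged the ``hard part.'' Your entire reduction rests on the claim that for a coset-style ansatz $A=\ps{x^b : b\in B}\cup\ps{wx^b:b\in B}$ the mixed blocks of $AA\inv$ and $A\inv A$ cancel in a size-preserving way, leaving $|AA\inv|-|A\inv A|=|B-B|-|B+B|$. This is false, and not fixable by a cleverer choice of $w$ or ``twisting.'' The mechanism that produces $B+B$ in $D_\infty$ is the relation $sr^bs=r^{-b}$; in $F_2$ no element conjugates $x$ to $x\inv$ (conjugacy in a free group is cyclic permutation of cyclically reduced words), so in any such ansatz the pure blocks only ever produce copies of $B-B$ on \emph{both} sides, while the mixed blocks are $|B-B|$-sized on one side and genuinely $|B|^2$-sized on the other. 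For instance, with $A=\ps{x^b}\cup\ps{x^by}$ one computes $|AA\inv|=|B-B|+2|B|^2$ and $|A\inv A|=4|B-B|-1$, so the difference is $2|B|^2-3|B-B|+1$, not $|B-B|-|B+B|$. This is precisely the Tao-type construction appearing in the paper's Proposition~2.8 and Remark~2.9: it yields quadratically large differences of one sign and does not sweep the even integers. Note also that the edge bijection $\phi\colon E(D_A)\to E(D_{A\inv})$ preserves additive energy (edge counts), not cardinalities of quotient sets, so it cannot be invoked to match the mixed blocks elementwise. Finally, your fallback --- ``combine with Theorem~\ref{theor:group-difference-is-even} to conclude the even integers are exactly the achievable set'' --- is logically invalid: the parity theorem only bounds the achievable set inside $2\Z$; it gives no lower bound, so it cannot certify that any particular $2n$ is attained.

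What is missing is an actual witness. The paper proceeds quite differently: it exhibits a concrete $5$-element set $A=\ps{x,\,xz,\,y\inv,\,y\inv x\inv y\inv,\,y\inv z}\subseteq F_3$ (found via the difference-graph construction) with $|AA\inv|=17$ and $|A\inv A|=15$, i.e.\ difference exactly $2$; it then achieves $2n$ by taking the union of $n$ copies of this set supported on pairwise disjoint generators inside $F_{3n}$, where a disjoint-support argument shows the cross terms contribute $n(n-1)|A|^2$ identically to both quotient sets, so the differences add to $2n$; and finally it transports everything into $F_2$ via an explicit embedding $F_m\hookrightarrow F_2$ (with $n=0$ from a cyclic subgroup and negative $n$ by replacing $A$ with $A\inv$). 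Your proposal contains no construction achieving a difference of exactly $2$, and no amplification step producing exactly $2n$, so the theorem is not reached.
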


The construction for the previous theorem uses a subset \(A \subseteq F_2\)
of cardinality \(5\) that satisfies \(|AA\inv| \neq |A\inv A|\).
The following theorem shows this construction is optimal
with respect to the size of \(A\) for a more general class of groups.

\begin{theorem}\label{theor:size-of-A}
     Let \(G\) be a group. Let $A \subseteq G$ be a finite subset
     and suppose that $|AA^{-1}| \neq |A^{-1}A|$. Then
     \begin{itemize}
         \item without any further assumptions, \(|A| \ge 4\), and

         \item if \(G\) is a group with no elements of order
         \(2\), then \(|A| \ge 5\).
     \end{itemize}
\end{theorem}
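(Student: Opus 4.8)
The plan is to prove the two lower bounds separately, in each case by exhaustive case analysis on the small set $A$, exploiting the fact that the quantities $|AA^{-1}|$ and $|A^{-1}A|$ are invariant under the natural symmetries of the problem. First I would record two reductions that shrink the casework dramatically. The first is that we may assume $e \in A$: replacing $A$ by $a^{-1}A$ for any fixed $a \in A$ leaves $|AA^{-1}|$ unchanged (it is literally the same set) and replaces $A^{-1}A$ by $A^{-1}a a^{-1} A = A^{-1}A$, so both cardinalities are preserved; choosing $a$ to be one of the elements of $A$ puts $e$ in the translated set. The second reduction is that the difference $|AA^{-1}| - |A^{-1}A|$ is antisymmetric under $A \mapsto A^{-1}$ and under conjugation $A \mapsto gAg^{-1}$ (the latter preserves each cardinality, the former swaps them); in particular, to show $|A|\le k$ forces equality it suffices to rule out $|A| = k$, since $|A| < k$ is handled by the same argument with a smaller set or follows a fortiori. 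Finally, if $A$ contains two elements $a,b$ with $ab^{-1}=b^{-1}a$ (e.g. if $a$ and $b$ commute, or if one is $e$), that relation contributes matched coincidences to both quotient sets, which is the mechanism forcing small sets to be balanced.

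For the first bullet ($|A|\ge 4$), after normalizing $e\in A$, a set of size $\le 3$ is $A=\{e,a,b\}$. Then $AA^{-1} = \{e, a, b, a^{-1}, b^{-1}, ab^{-1}, ba^{-1}\}$ and $A^{-1}A = \{e,a,b,a^{-1},b^{-1},a^{-1}b,b^{-1}a\}$. I would set up a bijection between the "off-diagonal" contributions: the map sending a coincidence among the pairs defining $AA^{-1}$ to the corresponding coincidence for $A^{-1}A$. Concretely one checks that $a = b^{-1}$ in $AA^{-1}$ (i.e. $ab=e$) is equivalent to $a^{-1}=b$ in $A^{-1}A$; that $ab^{-1} = ba^{-1}$ is equivalent to $a^{-1}b = b^{-1}a$ (both say $a^2 = b^2$ after rearranging... careful, one must check this honestly in the nonabelian setting); and that the collisions of $ab^{-1}$ or $ba^{-1}$ with the singletons $e,a,b,a^{-1},b^{-1}$ pair up with the analogous collisions on the left side. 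Carrying this out shows $|AA^{-1}|$ and $|A^{-1}A|$ are always equal when $|A|\le 3$. The authors' difference-graph machinery ($D_A$, $D_{A^{-1}}$ and the edge bijection announced in the abstract) is presumably the clean way to organize exactly this bookkeeping, so I would phrase the argument in that language.

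For the second bullet, assume $G$ has no $2$-torsion, normalize $e\in A$, and suppose for contradiction that $|A|=4$, say $A = \{e,a,b,c\}$, with $|AA^{-1}|\ne|A^{-1}A|$. By Theorem~\ref{theor:group-difference-is-even} the difference is even, hence $|AA^{-1}| - |A^{-1}A| \ge 2$ in absolute value, so one of the two quotient sets must have at least two more elements than the other — this is a strong constraint for sets whose quotient sets have at most $|A|^2 - |A| + 1 = 13$ elements. I would again use the edge bijection between $D_A$ and $D_{A^{-1}}$: since the difference equals (up to sign) the difference in the number of connected components, an imbalance of $2$ forces a specific small configuration of coincidences — essentially a single "extra" merge on one side. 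Then I would show that in a $2$-torsion-free group every such configuration is self-dual: each relation of the form $xy^{-1} = zw^{-1}$ among $\{e,a,b,c\}$ forces, via the absence of order-$2$ elements, the mirrored relation $x^{-1}y = z^{-1}w$, so components match up and the difference is $0$, a contradiction. The main obstacle is the last step: enumerating the possible coincidence patterns for a $4$-element set and verifying, relation by relation, that $2$-torsion-freeness restores the left–right symmetry — this is where the hypothesis is genuinely used and where a naive case split is largest, so the difference-graph formalism and the $A\mapsto A^{-1}$, $A\mapsto gAg^{-1}$ symmetries must be deployed aggressively to keep the number of cases manageable.
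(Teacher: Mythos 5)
Your first bullet is workable and is essentially the paper's argument in concrete coordinates. Two small corrections there: in the normalization $A\mapsto a^{-1}A$ the roles are swapped — $(a^{-1}A)^{-1}(a^{-1}A)=A^{-1}A$ is literally unchanged, while $(a^{-1}A)(a^{-1}A)^{-1}=a^{-1}(AA^{-1})a$ is only a conjugate — though both cardinalities are indeed preserved; and the equivalence you hedge on does hold, but not because "$a^2=b^2$": $ab^{-1}=ba^{-1}$ is equivalent to $a^{-1}b=b^{-1}a$ by multiplying on the left by $b^{-1}$ and on the right by $b$. You should also say explicitly that pairwise-equivalent coincidence conditions are not by themselves enough — you need one fixed bijection of the formal products (with $e\in A$, the map $a_ia_j^{-1}\mapsto a_j^{-1}a_i$ works) under which every coincidence corresponds, so the two partitions have the same number of blocks. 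The paper's proof of the $|A|\le 3$ case does exactly this bookkeeping with the difference graphs and, as you note, uses no torsion hypothesis.

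The second bullet, however, has a genuine gap: the central claim that in a $2$-torsion-free group "each relation $xy^{-1}=zw^{-1}$ among $\{e,a,b,c\}$ forces the mirrored relation $x^{-1}y=z^{-1}w$" is false, and it is not the mechanism behind the theorem. Take $A=\{a_1,a_2,a_3,a_4\}=\{e,\,xy,\,y,\,x\}\subseteq F_2$, which is torsion-free: then $a_2a_3^{-1}=x=a_4a_1^{-1}$, but $a_2^{-1}a_3=y^{-1}x^{-1}y\neq x^{-1}=a_4^{-1}a_1$. The conclusion still holds for this $A$, but for a different reason: the edge $[(2,3),(4,1)]$ of $D_A$ is carried by $\phi$ to the edge $[(4,2),(1,3)]$ of $D_{A^{-1}}$, i.e.\ to the true relation $a_4^{-1}a_2=a_1^{-1}a_3$, so an isolated edge goes to an isolated edge and the component count is unchanged. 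In general $\phi$ can only change the number of components through transitivity, i.e.\ through off-diagonal components with at least three vertices, so the whole content of the $|A|=4$ case is an analysis of which triangles and $K_4$'s can occur. That is precisely where the no-order-$2$ hypothesis enters in the paper: it forbids edges between $(i,j)$ and $(j,i)$ (\autoref{lem:properties-of-D_A} \eqref{lemitem:no-symmetric-connections}) and rules out an off-diagonal $K_4$ when $|A|=4$ (the cyclic relations would force $(a_1a_3^{-1})^2=e$), after which \autoref{lem:clique-impossible} and the triangle case analysis of \autoref{lem:triangle-invariant} show that $\phi$ preserves the number of components. Your proposal replaces this with a false local "self-duality" statement, never identifies where $2$-torsion-freeness is actually used, and never analyzes the configurations ("a single extra merge") it appeals to, so the harder half of the theorem is not established.
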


A brute force search among groups of small order
shows that the bound \(|A|\ge 4\) is sharp (see Example \ref{ex:qdih-group-A-4}).
The main tool we use to prove this result is a graph associated to
the left (and right) quotient sets, which we call the difference graph.
The cardinalities of \(AA\inv\) and \(A\inv A\) can be interpreted as
the number of connected components on these graphs. Making use of a bijection of
edges between these graphs, we perform an argument based on the properties of
this graph to prove that when \(|A| \le 3\) (resp.\ \(|A| \le 4\) when \(A\)
has no elements of order \(2\)), the number of connected components does not change under the bijection of edges.

\section{Left vs. Right Quotient Sets}

    \subsection{Graph Construction}
    To prove our results, we define the \textit{difference graph} $D_A$ of a finite subset  \(A \coloneqq \{a_1,a_2,...,a_n\} \subseteq G\). The graph \(D_A=(V,E)\) is defined as follows:
    \begin{enumerate}
        \item The vertex set is given by \(V \coloneqq[n]\times [n]\).
        \item The edge set \(E(D_A)\) is given by the relation
    \[(i,j)\sim (k,\ell) \text{ if and only if } a_ia_j\inv = a_ka_\ell\inv.\]
    \end{enumerate}
    The difference graph is directed and \textit{not} simple (we allow self-loops).

    Similarly, for \(D_{A^{-1}}\), we have the edge relation
    \[(i,j)\sim(k,\ell)\text{ if and only if }a_i^{-1}a_j=a_k^{-1}a_\ell.\]
    We first note the following basic facts about \(D_A\).

\begin{lemma}[Properties of $D_A$]\label{lem:properties-of-D_A}\
Let \(i,j,k,\ell\in [n]\).
\begin{enumerate}
    \item \([(i,j), (k,\ell)] \in E(D_A) \) if and only if \( [(j,i), (\ell,k)] \in E(D_A)\).
\item The following types of edges are not present in \(E(D_A)\).

\begin{enumerate}
\item \([(i,j), (k,k)]\), an edge connecting to the diagonal, provided that \(i\neq j\). \label{lemitem:no-diagonal}
\item \([(i,j), (i,k)]\) (or \([(j,i), (k,i)]\), but this is handled by (1)), an edge connecting vertices on the same axis. \label{lemitem:no-same-axis}
\item If \(G\) has no elements of order \(2\), then no edge connects a vertex $(i,j)$ to its symmetric pair $(j,i)$, provided that \(j\neq i\). \label{lemitem:no-symmetric-connections}
\end{enumerate}
\item \([(i,i),(j,j)]\in E(D_A)\). \label{lemitem:all-diagonals}
\item If \(C\) is a connected component in \(D_A\), then \(C\) is a clique. \label{lemitem:component-cliques}
    \end{enumerate}
\end{lemma}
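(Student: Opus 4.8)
The plan is to reduce every item to the defining relation $[(i,j),(k,\ell)]\in E(D_A)\iff a_ia_j\inv=a_ka_\ell\inv$, using throughout that $a_1,\dots,a_n$ are distinct (so that $a_p=a_q$ forces $p=q$). Part (1) is simply the observation that inverting an equation in a group is reversible: $a_ia_j\inv=a_ka_\ell\inv$ holds if and only if $(a_ia_j\inv)\inv=(a_ka_\ell\inv)\inv$, that is, $a_ja_i\inv=a_\ell a_k\inv$, which is precisely the condition $[(j,i),(\ell,k)]\in E(D_A)$.

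Part (2) is a short case analysis. If $[(i,j),(k,k)]\in E(D_A)$, then $a_ia_j\inv=a_ka_k\inv=e$, so $a_i=a_j$ and hence $i=j$; this proves (a), the caveat $i\neq j$ being necessary since diagonal vertices are always mutually adjacent by (3). If $[(i,j),(i,k)]\in E(D_A)$, then cancelling $a_i$ on the left gives $a_j\inv=a_k\inv$, so $j=k$ and the two vertices coincide; this proves (b), and the mirror statement for $[(j,i),(k,i)]$ follows by applying (1). For (c), assume $G$ has no element of order $2$ and $[(i,j),(j,i)]\in E(D_A)$; setting $g\coloneqq a_ia_j\inv$, the edge condition reads $g=a_ja_i\inv=g\inv$, so $g^2=e$, which forces $g=e$ and therefore $i=j$. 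This is the unique place where the order-$2$ hypothesis is used.

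Part (3) is immediate, as $a_ia_i\inv=e=a_ja_j\inv$ for all $i,j$. For part (4), the key point is conceptual rather than computational: the adjacency relation defining $E(D_A)$ is exactly the pullback of the equality relation on $G$ along the map $V\to G$, $(i,j)\mapsto a_ia_j\inv$. Equality being an equivalence relation, so is this relation on $V=[n]\times[n]$ — it is reflexive (every vertex carries a self-loop), symmetric, and transitive. Hence two vertices lie in a common connected component if and only if they are already joined by an edge, so each connected component is a clique (an equivalence class of this relation, with all self-loops present). I do not anticipate a genuine obstacle anywhere; the only thing demanding care is the bookkeeping of the distinctness of the $a_i$, which licenses the passage from ``$a_p=a_q$'' to ``$p=q$'' in (2a) and (2b), and the recognition in (4) that the adjacency relation is transitive.
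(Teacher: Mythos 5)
Your proposal is correct and follows essentially the same route as the paper: invert the defining equation for (1), cancel and use distinctness of the $a_i$ for (2a)--(2b), use the no-order-$2$ hypothesis to force $g=g\inv$ into $g=e$ for (2c), and observe for (4) that adjacency is the pullback of equality on $G$, hence an equivalence relation whose classes are the components. Your elaboration of (4) just makes explicit what the paper states in one line.
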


\begin{proof}
(1) Suppose \(a_ia_j\inv = a_ka_\ell\inv\). Then
\[
    a_ja_i\inv\ ~=~\ (a_ia_j\inv)\inv \ ~=~ \  (a_ka_\ell\inv)\inv \ ~=~ \  a_\ell a_k\inv.
\]
Hence, \([(j,i), (\ell,k)] \in E(D_A)\) and note the reverse follows.

(2a) Let \([(i,j), (k,k)] \in E(D_A)\) where \(i\neq j\). Thus
\[
    a_ia_j\inv \ ~=~ \  a_ka_k\inv \ ~=~ \  e,
\]
which implies \(a_i~=~a_j\). But \(i\neq j\), a contradiction.

(2b) Without loss of generality, take the edge \([(i,j), (i,k)]\). Then we have
    \[
        a_ia_j\inv \ ~=~ \  a_ia_k\inv,
    \]
    which implies \(a_j ~=~ a_k\).

    (2c)  Let \([(i,j), (j,i)] \in E(D_A)\) where \(i\neq j\). Then,
    \[
        a_ia_j\inv \ ~=~ \  a_ja_i\inv \ ~=~ \  (a_ia_j\inv)\inv,
    \]
    which implies
    \[
     (a_ia_j\inv)^2 \ ~=~ \  e.
    \]
    Because $G$ has no elements of order $2$, so $a_ia_j^{-1}=e$ and \(a_i~=~a_j\),
    contradicting our assumption that \(i\neq j\).

    (3) Consider \(a_ia_i\inv ~=~ e ~=~ a_ja_j\inv\), thus \([(i,i),(j,j)]\in E(D_A)\).

    (4) This follows because equality is an equivalence relation.
\end{proof}

\begin{remark}
    In light of Property (4) Lemma \ref{lem:properties-of-D_A}, we refer to a connected component $C$ in $D_A$ consisting of $k$ elements simply by indicating its vertices $C=(a_1,b_1)(a_2,b_2)\cdots (a_k,b_k)$.
\end{remark}

\begin{remark}\label{rmk:transpose-definition}
    Property (1) is the same as saying the ``transpose'' operation
    \begin{align*}
        {T}\colon{E(D_A)}&\to{E(D_A)} \\
        [(i,j),(k,\ell)] &\mapsto [(j,i),(\ell,k)]
    \end{align*}
    is a graph automorphism.
\end{remark}

Let \(\mathcal{C}(D_A)\) be the set of connected components of
\(D_A\) and \(c(D_A) = \pa{\mathcal{C}(D_A)}\) be the number of
connected components. Note that \(c(D_A) = |AA\inv|\) and \(c(D_{A\inv}) = |A\inv A|\).

Using the fact that
\begin{align*}
    a_ia_j^{-1}\ ~=~ \ a_ka_{\ell}^{-1} \text{\ \ exactly when\ \ } a_k^{-1}a_{i} \ ~=~ \  a_{\ell}^{-1}a_j,
\end{align*}
we obtain a bijection of edges between $D_A$ and $D_{A^{-1}}$ as follows:
  \begin{align}
    \begin{aligned}
    {\phi}\colon{E(D_A)}&\to{E(D_{A^{-1}})} \\
    [(i,j), (k,\ell)]&\mapsto [(k,i), (\ell, j)].\nonumber
    \end{aligned}
\end{align}
A priori, this map is only well-defined if we consider edges as directed
(the reverse edge gets mapped to the transpose of the original edge) and
allow loops (they get mapped to the diagonal). However, since
$T$ is an automorphism of $D_A$, we may take $D_A$ to be
undirected. This is a well-motivated map: its existence already tells us that the \emph{additive energies} \(\Lambda(A,A\inv) ~\coloneqq~ \{\#(a_1,a_2,a_3,a_4) \in A^4 : a_1a_2^{-1}=a_3a_4^{-1}\}\) and \(\Lambda(A\inv, A) ~\coloneqq~ \{\#(a_1,a_2,a_3,a_4) \in A^4 : a_1^{-1}a_2=a_3^{-1}a_4\}\) are equal in non-commutative groups (see \cite{tao2011productsetestimatesnoncommutative}).

\begin{example}
When \(n=5\), consider the example in Figure \ref{fig:mltr-graph-example}.
\begin{figure}[h]
    \begin{tikzpicture}[scale=1.0]
    \foreach \x in {0,...,4}{
        \foreach \y in {0,...,4}{
            \node (\x\y) at (\x,\y) {};
            \filldraw (\x\y) circle [fill=black,radius=2pt] {};
        }
    }

    \foreach \x in {5,...,9}{
        \foreach \y in {0,...,4}{
            \node (\x\y) at ({(\x + 2)},\y) {};
            \filldraw (\x\y) circle [fill=black,radius=2pt] {};
        }
    }

    \foreach \x in {1,...,5}{
        \node [below=10pt] at ({(\x-1)},0) {\(a_{\x}\)};
    }

    \foreach \x in {1,...,5}{
        \node [below=10pt] at ({(\x+6)},0) {\(a_{\x}\)};
    }

    \foreach \y in {1,...,5}{
        \node [left=5pt] at (0,{(\y-1)}) {\(a_{\y}^{-1}\)};
    }

    \foreach \y in {1,...,5}{
        \node [left=5pt] at (7,{(\y-1)}) {\(a_{\y}^{-1}\)};
    }

    \foreach \x in {0,...,3} {
        \draw [-] (\x,\x) -- ({\x+1},{\x+1});
        \draw [-] (\x+7,\x) -- ({\x+8},{\x+1});
    }

    \draw [-] (0,2) -- (1,4) -- (2,3) -- cycle;
    \draw [-] (2,0) -- (4,1) -- (3,2) -- cycle;

    \draw [-] (0+7,2) -- (2+7,3);
    \draw [-] (0+7,1) -- (2+7,4);
    \draw [-] (1+7,2) -- (4+7,3);

    \draw [-] (2+7,0) -- (3+7,2);
    \draw [-] (1+7,0) -- (4+7,2);
    \draw [-] (2+7,1) -- (3+7,4);

    \draw[->] (4.5,2) -- (5.8,2)
        node [above, text centered, midway] {\(\phi\)};
\end{tikzpicture}
    \caption{This example shows that the number of connected components in \(D_A\) changes after applying the map \(\phi\).}
    \label{fig:mltr-graph-example}
\end{figure}

The former (left) graph prior to the mapping $\phi$ has \(17\) connected components, and the latter (right) graph after $\phi$ is applied has
\(15\) connected components.

The first graph corresponds to
\[
    a_3a_1\inv \ ~=~ \  a_4a_3\inv \ ~=~ \  a_5a_2\inv
\]
being satisfied, and no other relations between words (other than
the diagonal). The set
\begin{equation}\label{eq:f2-set-with-lr-difference}
        A \ ~\coloneqq~\ \ps{x, xz, y\inv, y\inv x\inv y\inv, y\inv z} \subseteq F_3\ ~=~ \ F(\ps{x,y,z})
\end{equation}
satisfies this property, therefore giving us an example where \(|AA\inv| \neq |A\inv A|\).
\end{example}

\subsection{Possible Differences}
The natural question that arises is: what are the possible differences between the cardinalities of the right and left quotient sets of $A \subset G$? In Example \ref{ex:infinite-dihedral} we gave a construction on the infinite dihedral group $D_\infty$ demonstrating the difference \(|AA\inv|-|A\inv A|\) achieves every possible \(n \in \Z\). Given the restriction that there are no elements of order 2 in our group, we can conclude that \(|AA\inv |-|A\inv A|\) is even. The proof of Theorem \ref{theor:group-difference-is-even} follows.

\begin{proof}[Proof of Theorem \ref{theor:group-difference-is-even}]
      It suffices to show that \(c(D_A) - c(D_{A\inv})\) is even.
    We claim that \(c(D_A)\) is odd.
    We divide \(\mathcal{C}(D_A)\) into two disjoint classes. \begin{enumerate}
        \item The connected components that are fixed under $T$ (i.e., $T(C) = C$), and
        \item the connected components which are disjoint from
    their image under \(T\); that is, those connected components that are swapped with a distinct component under \(T\).
    \end{enumerate}
    Denote these sets by
    \(\mathcal{C}_1\) and \(\mathcal{C}_2\) respectively.
    Note that \(\pa{\mathcal{C}_2}\) is even as each component comes in pairs.

    We claim that \(\mathcal{C}_1\) contains only the diagonal
    (the diagonal is a connected component by Lemma \ref{lem:properties-of-D_A} (\ref{lemitem:all-diagonals}, \ref{lemitem:no-diagonal})). Indeed, if any other component
    \(C\) belongs to \(\mathcal{C}_1\), then there exists a vertex \((i,j)\)
    with \(i\neq j\) such that \([(i,j),(j,i)]\in C\), which contradicts Lemma \ref{lem:properties-of-D_A} \eqref{lemitem:no-symmetric-connections}. Therefore, there is exactly one component of \(\mathcal{C}_1\) while the rest of the connected components come in symmetric pairs, so \(|\mathcal{C}_1|\) is odd. Applying the same reasoning to \(c(D_{A^{-1}})\) shows  it is odd, so \(c(D_A) - c(D_{A\inv})\) is even.
\end{proof}

    We now consider the set of possible differences in $F_2$. In order to prove Theorem \ref{theor:f2-has-all-even-differences}, we use the following.

\begin{fact}\label{fact:f_m-injto-f_2}
    Let \(m\ge 2\) be an integer.
    Then there exists an embedding \(F_m \hookrightarrow F_2\).
\end{fact}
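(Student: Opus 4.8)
The plan is to exhibit an explicit rank-$m$ free subgroup of $F_2 = F(\ps{a,b})$, namely the subgroup generated by the conjugates
\[
    x_i ~\coloneqq~ a^i b a^{-i}, \qquad i = 1,2,\dots,m.
\]
There is always a surjection from $F_m$ onto $\pv{x_1,\dots,x_m}$ sending a fixed free generating set to the $x_i$, so it suffices to check that this surjection is injective; equivalently, that no nonempty reduced word $w = x_{i_1}^{\epsilon_1} x_{i_2}^{\epsilon_2}\cdots x_{i_k}^{\epsilon_k}$, with $\epsilon_j \in \ps{\pm 1}$ and with no cancellation in $F_m$ (i.e.\ never $i_j = i_{j+1}$ together with $\epsilon_j = -\epsilon_{j+1}$), represents the identity of $F_2$.

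The key step is a direct reduced-word computation in $F_2$. Substituting the definition of $x_i$ and using $a^{-i_j}a^{i_{j+1}} = a^{i_{j+1}-i_j}$, the word becomes
\[
    w ~=~ a^{i_1}\, b^{\epsilon_1}\, a^{i_2 - i_1}\, b^{\epsilon_2}\, a^{i_3 - i_2}\cdots a^{i_k - i_{k-1}}\, b^{\epsilon_k}\, a^{-i_k}.
\]
I would then examine each interior block $a^{i_{j+1}-i_j}$: if $i_{j+1}\ne i_j$ it is a nontrivial power of $a$ that insulates $b^{\epsilon_j}$ from $b^{\epsilon_{j+1}}$; if $i_{j+1}=i_j$, then reducedness of $w$ forces $\epsilon_j = \epsilon_{j+1}$, so the two $b$-syllables merge into $b^{\epsilon_j+\epsilon_{j+1}} = b^{\pm 2}\ne e$. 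Collapsing maximal runs of repeated indices this way, $w$ rewrites as a reduced, nonempty, alternating word in nontrivial powers of $a$ and nontrivial powers of $b$ (the only possibly trivial syllables are the two outermost, $a^{i_1}$ and $a^{-i_k}$, which do not matter since at least one $b$-syllable survives). Hence $w\ne e$ in $F_2$, so $\pv{x_1,\dots,x_m}\cong F_m$ and the inclusion into $F_2$ is the desired embedding.

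As an alternative one can appeal to Nielsen--Schreier: for $m\ge 3$ the kernel of a surjection $F_2\twoheadrightarrow \Z/(m-1)\Z$ (send both generators to $1$) has index $m-1$, hence is free of rank $(m-1)(2-1)+1 = m$, and for $m=2$ one may take the identity map. I would present the explicit construction above as the main argument since it is self-contained. The only step requiring care is the bookkeeping in the reduced-word analysis — correctly handling runs $i_j = i_{j+1} = \cdots$ of repeated indices and the combined $b$-syllables they produce — but this is routine, and I do not expect any genuine obstacle.
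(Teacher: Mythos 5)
Your argument is correct, but it takes a genuinely different route from the paper: the paper does not prove the Fact at all, it simply cites L\"oh's textbook (where the statement is obtained via standard machinery such as ping-pong or Nielsen--Schreier/covering-space arguments). You instead give a self-contained proof: the conjugates $x_i = a^i b a^{-i}$ generate a rank-$m$ free subgroup, verified by the reduced-word computation, and your handling of runs of equal indices is the right bookkeeping --- consecutive equal indices must carry equal exponents by reducedness in $F_m$, so the merged $b$-syllable is a nonzero power, and the resulting syllable form is reduced with at least one $b$-syllable, hence nontrivial. Your Nielsen--Schreier alternative (kernel of $F_2 \twoheadrightarrow \Z/(m-1)\Z$ is free of rank $1+(m-1)(2-1)=m$) is also correct and is in fact closer in spirit to what the paper actually uses right after the Fact, namely the concrete index-two-type embedding $x\mapsto x^2$, $y\mapsto xy$, $z\mapsto xy^{-1}$ of $F_3$ into $F_2$. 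What your approach buys is an explicit, citation-free proof; what the paper's approach buys is brevity, deferring a standard fact to the literature while supplying the particular embedding it needs for the construction in Theorem \ref{theor:f2-has-all-even-differences}.
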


\begin{proof}
    See, for example \cite[Chapter I. Proposition 3.1.]{LyndonSchuppCombinatorial}.
\end{proof}

Writing out this embedding explicitly allows us to describe a subset of \(F_2\) where \(|AA\inv| \neq |A\inv A|\). Indeed, take the set $A$ given in \eqref{eq:f2-set-with-lr-difference} with the embedding \(F_3\hookrightarrow F_2\) by
\(x\mapsto x^2\), \(y\mapsto xy\), \(z\mapsto xy^{-1}\) to get the set
\[
    \{x^2,x^3y\inv, y\inv x\inv,y\inv x^{-3}y\inv x \inv, y^{-2}\} \subseteq F_2.
\]


\begin{proof}[Proof of Theorem \ref{theor:f2-has-all-even-differences}]
    Any set \(A \subseteq \pv{a} \cong \mathbb{Z}\) with \(a\in F_2\)
    yields the \(n=0\) case.
    By replacing \(A\) with \(A\inv\), it suffices to prove the result for
    positive \(n\). So, we construct a family of sets \(A_n \subseteq F_m\) for some \(m\ge 2\) and then compose it with the embedding in Fact \ref{fact:f_m-injto-f_2}.
    The following set (described above) in \(F_3=F(\ps{x,y,z})\) for $n=1$
    \[
    \begin{aligned}
      &A = \{x, \ xz, \ y^{-1}, \ y^{-1}x^{-1}y^{-1}, \ y^{-1}z\},\\
      &A^{-1} = \{x^{-1}, \ z^{-1}x^{-1}, \ y, \ yxy, \ z^{-1}y\},
    \end{aligned}
\]
    has
\[
\begin{aligned}
AA^{-1}\ ~=~ \ \{&e,\ xz^{-1}x^{-1},\ xy,\ xyxy,\ xz^{-1}y,\
xzx^{-1},\ xzy,\ xzyxy,
\ y^{-1}x^{-1},\ y^{-1}z^{-1}x^{-1},\ \\&y^{-1}z^{-1}y,\
y^{-1}x^{-1}y^{-1}x^{-1},\ y^{-1}x^{-1}y^{-1}z^{-1}x^{-1},\ y^{-1}x^{-1}y^{-1}z^{-1}y,\\
&y^{-1}zx^{-1},\ y^{-1}zy,\ y^{-1}zyxy\},
\end{aligned}
\]
\[
\begin{aligned}
A^{-1}A\ ~=~ \ \{&e,\ z,\ x^{-1}y^{-1},\ x^{-1}y^{-1}x^{-1}y^{-1}, x^{-1}y^{-1}z,\ z^{-1},\ z^{-1}x^{-1}y^{-1}, z^{-1}x^{-1}y^{-1}x^{-1}y^{-1},\  \\ &z^{-1}x^{-1}y^{-1}z,\
 yx,\ yxz,\
yxyx,\  yxyxz,\
 z^{-1}yx, \ z^{-1}yxz\}.
\end{aligned}
\]
    Hence,
    \[
        |AA\inv| - |A\inv A| \ ~=~ \  17 - 15 \ ~=~ \  2.
    \]

    More generally for \(n\ge 1\), \(A_n\) is constructed as a subset of
    \(F_{3n} = F(\ps{x_1,y_1,z_1,\dots,x_n,y_n,z_n})\) as follows. Let
    \[
        A_n \ ~\coloneqq~ \  \bigcup_{i=1}^{n}\ps{x_i, y_i\inv, y_i\inv x_i\inv y_i\inv, x_iz_i, y_i\inv z_i}.
    \]
    We claim
    \[
        |A_nA_n\inv| - |A_n\inv A_n| \ ~=~ \  2n.
    \]
    Define \(A_n^{(i)} = \ps{x_i, y_i\inv, y_i\inv x_i\inv y_i\inv, x_iz_i, y_i\inv z_i}\) for \(1\le i \le n\). We have
    \begin{align*}
        A_nA_n\inv &\ ~=~ \  \bigsqcup_{i=1}^{n}\bigsqcup_{j=1}^{n} A_n^{(i)}\pp{A_n^{(j)}}^{-1},\\
        A_n\inv A_n &\ ~=~ \  \bigsqcup_{i=1}^{n}\bigsqcup_{j=1}^{n} \pp{A_n^{(i)}}^{-1}A_n^{(j)}.
    \end{align*}
    This implies\footnote{
    For \(i \ne j\), the sets \(A_n^{(i)}\) and \(A_n^{(j)}\) are supported on disjoint sets of generators in \(F_{3k}\), meaning any product of the form \(ab^{-1}\) or \(a^{-1}b\) where $a \in A_n^{(i)} $ and \(b\in A_n^{(j)}\) produces a word involving letters from distinct alphabets. Thus, all such products are distinct. However, if \(i=j\), the words produced come from the same alphabet, which do not generate distinct products. } 
    \begin{align*}
        |A_nA_n\inv| &\ ~=~ \  \sum_{i=1}^{n}\sum_{j=1}^{n} \pa{A_n^{(i)}\pp{A_n^{(j)}}^{-1}} \\
        &\ ~=~ \  \sum_{i=1}^{n} \pa{A_n^{(i)}\pp{A_n^{(i)}}^{-1}} + \sum_{i=1}^{n}\sum_{\substack{j=1 \\ j\neq i}}^{n}\pa{A_n^{(i)}\pp{A_n^{(j)}}^{-1}} \\
        &\ ~=~ \  n\cdot |AA\inv| + n\cdot(n-1)\cdot |A|^2,
    \end{align*}
    and
    \begin{align*}
        |A_n\inv A_n| &\ ~=~ \  \sum_{i=1}^{n}\sum_{j=1}^{n} \pa{\pp{A_n^{(i)}}^{-1}A_n^{(j)}} \\
        &\ ~=~ \  \sum_{i=1}^{n} \pa{\pp{A_n^{(i)}}^{-1}A_n^{(i)}} + \sum_{i=1}^{n}\sum_{\substack{j=1 \\ j\neq i}}^{n}\pa{\pp{A_n^{(i)}}^{-1}A_n^{(j)}} \\
        &\ ~=~ \  n\cdot |A\inv A| + n\cdot(n-1)\cdot |A|^2.
    \end{align*}
    Therefore,
    \[
        |A_nA_n\inv| - |A_n\inv A_n| \ ~=~ \  n\cdot (|AA\inv| - |A\inv A|) \ ~=~ \  2n. \qedhere
    \]
\end{proof}

Theorem \ref{theor:group-difference-is-even} and Theorem \ref{theor:f2-has-all-even-differences} establish that every even integer arises in the difference between the left and right quotient set. Naturally, we may now ask how large such differences can be in terms of the cardinality of $A$. Define
\[
    M_n \ ~\coloneqq~ \  \sup_{\substack{A \subseteq F_2 \\ |A| = n}}\left||AA^{-1}| - |A^{-1}A|\right|.
\]
\begin{proposition}\label{prop:growth-of-lr-difference} We have
$M_n = \Theta(n^2)$.
\end{proposition}

\begin{proof}
    We establish both an upper and lower bound. For the lower bound, observe that for any finite subset $A \subseteq F_2$ with $|A| = n$ it follows that $|AA\inv| \le n^2$ and $|A\inv A| \le n^2$. This implies $||AA\inv|-|A\inv A||\le n^2$ thus $M_n \le n^2$ so $M_n = O(n^2)$. For the lower bound, let \begin{align*}
        A_k &\ ~\coloneqq~ \  \{x^i: 1 \le i \le k\},\\
        B_k&\ ~\coloneqq~ \ \{x^iy:1\le i \le k\}, \\
        C_k&\ ~\coloneqq~ \  A_k\cup B_k.
    \end{align*} Note that $|C_k| = 2k $ and thus let $n = 2k$. Next, we compute the size of the right quotient set:
    \begin{align*}
        C_kC_{k\inv} &\ ~=~ \ (A_k\cup B_k)(B_{k}\inv \cup A_{k}\inv)\\
        &\ ~=~ \ (A_kB_{k}\inv)\cup(B_kB_{k}\inv)\cup(A_kA_{k}\inv)\cup(B_kA_{k}\inv).
    \end{align*}
    As $A_kA_{k}\inv = \{x^{i-j}\}=B_kB_{k}\inv$, this accounts for $2k-1$ elements. As $A_kB_k\inv = \{x^iy\inv x^{-j}\}$, this adds $k^2$ elements, and lastly for $B_kA_k\inv=\{x^iyx^{-j}\}$, there are also $k^2$ elements. Thus,
    \begin{align*}
        |C_kC_k\inv|\ ~=~ \ 2k^2+2k-1.
    \end{align*}
    For the left quotient set, we obtain
    \begin{align*}
        C_k\inv C_k&\ ~=~ \ (B_k\inv \cup A_k\inv)(A_k \cup B_k)\\
        &\ ~=~ \ (B_k\inv A_k)\cup(B_k\inv B_k)\cup(A_k\inv A_k)\cup(A_k\inv B_k).
    \end{align*}
    As each of these four terms contribute $2k-1$ elements,
    \begin{align*}
        |C_k\inv C_k|\ ~=~ \ 4(2k-1)=8k-4.
    \end{align*}
    Then, taking the difference between the left and right quotient set yields
    \begin{align*}
        ||C_kC_k\inv|-|C_k\inv C_k|| &\ ~=~ \  (2k^2+2k-1)-(8k-4)\\
        &\ ~=~ \ 2k^2-6k+1.
    \end{align*}
    Substituting $k=n/2$ gives $M_n\geq \frac{1}{2}n^2 -3n +1 = \Omega(n^2)$. Thus, $M_n=\Theta(n^2).$
\end{proof}

\begin{remark}
    The following remark is mentioned in \cite[Remark 4.4.]{tao2011productsetestimatesnoncommutative}. If
    \(H \subseteq G\) is a finite subgroup of some group \(G\) and
    \(g\) is an element of \(G\) not in the normalizer of \(H\),
    then the set \(A \coloneqq Hg\cup H\) has \(A\inv A\) about
    the same size as \(H\), but \(AA\inv\) can be possibly large. If we slightly modify this construction, letting \(H\) be a \textit{subset} of \(G\), then the proof of
    Proposition \ref{prop:growth-of-lr-difference} is the case
    \(H = \ps{x,x^2,\dots,x^k}\) and \(g=y\). Indeed,
    \(|A^{-1}A| = O(|H|)\), but \(|AA^{-1}| = O(|H|^2)\).
\end{remark}

\subsection{Cardinality of $A$}
The set $A$ given in
\eqref{eq:f2-set-with-lr-difference} is extremal in the sense that it is a subset of $F_2$ with minimal cardinality such that $|AA^{-1}|\neq |A^{-1}A|$.
In fact, it has minimal cardinality among subsets with $|AA^{-1}|\neq |A^{-1}A|$ in any group with no elements of order $2$ as a consequence of Theorem \ref{theor:size-of-A}.
\begin{lemma}\label{lem:clique-impossible}
Let $G$ be a group with $A \subset G$ and \(|A| = n\). Then \(D_A\) contains no connected component with more than \(n\) elements.
\end{lemma}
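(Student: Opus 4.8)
The plan is to prove the slightly cleaner statement that \emph{every} connected component $C$ of $D_A$ satisfies $|C|\le n$; since the diagonal has exactly $n$ vertices, this immediately yields the claim (no component, the diagonal included, exceeds $n$). The crux is a ``partial permutation matrix'' picture for components: by \autoref{lem:properties-of-D_A} \eqref{lemitem:component-cliques} a connected component is a clique, so any two of its vertices are joined by an edge, while by \autoref{lem:properties-of-D_A} \eqref{lemitem:no-same-axis} no edge joins two \emph{distinct} vertices lying in a common row or column of the grid $[n]\times[n]$. Hence $C$ meets each row of the grid in at most one vertex.

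To make this precise I would consider the projection $\pi\colon C\to[n]$, $\pi(i,j)=i$. Suppose $(i,j)$ and $(i,j')$ are distinct vertices of $C$; then $j\neq j'$, and since $C$ is a clique we have $[(i,j),(i,j')]\in E(D_A)$, contradicting \autoref{lem:properties-of-D_A} \eqref{lemitem:no-same-axis}. Thus $\pi$ is injective and $|C|=|\pi(C)|\le n$. (Symmetrically, using property (1) of \autoref{lem:properties-of-D_A}, the projection onto the second coordinate is injective as well, though this is not needed here.)

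I do not expect a genuine obstacle here: once the relevant parts of \autoref{lem:properties-of-D_A} are in hand, the argument is two lines. The one point requiring care is to invoke \eqref{lemitem:no-same-axis} only for pairs of \emph{distinct} vertices --- self-loops $[(i,j),(i,j)]$ are always present in $D_A$ and are not excluded by that property --- which is exactly why the step above is phrased with $j\neq j'$. As a sanity check the bound $|C|\le n$ is sharp, e.g.\ taking $A$ to be all of a finite group $G$ with $|G|=n$, the set $\{(i,j):a_ia_j^{-1}=g\}$ is a non-diagonal component with exactly $n$ vertices for any $g\neq e$; this also suggests the alternative, purely algebraic phrasing (a component is precisely such a fibre over some $g\in AA^{-1}$, and for each $j\in[n]$ at most one $i$ has $a_i=ga_j$). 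I would nonetheless present the graph-theoretic version, since the ``at most one vertex per row and per column'' structure of a non-diagonal component --- which moreover contains no diagonal vertex, by \autoref{lem:properties-of-D_A} \eqref{lemitem:no-diagonal} --- is exactly the structure reused in the proof of \autoref{theor:size-of-A}.
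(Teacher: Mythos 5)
Your proposal is correct and follows essentially the same route as the paper: the paper argues by pigeonhole that a component with more than $n$ vertices has two vertices sharing a first coordinate, then combines the clique property \autoref{lem:properties-of-D_A}~\eqref{lemitem:component-cliques} with \autoref{lem:properties-of-D_A}~\eqref{lemitem:no-same-axis} to get a contradiction, which is exactly your injective-projection argument in slightly different words. Your added observations (the bound also covers the diagonal, and sharpness when $A=G$) are fine but not needed.
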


\begin{proof}
    Suppose, for contradiction, \(C\) was a connected component with more than \( n\) elements.
    By the pigeonhole principle, two vertices in \(C\) have the same
    first coordinate. Since \(C\) is a clique by
    Lemma \ref{lem:properties-of-D_A} \eqref{lemitem:component-cliques}, this contradicts
    Lemma \ref{lem:properties-of-D_A} \eqref{lemitem:no-same-axis}.
\end{proof}

\begin{lemma}\label{lem:triangle-invariant}
    Let $G$ be a group with $A \subseteq G$ and $|A| = 4$. If $G$ does not have an element of order 2, then the number of connected components in $D_A$ is equal to the number of connected components in $D_{A^{-1}}$.
\end{lemma}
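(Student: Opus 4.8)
The plan is to reduce everything to counting off-diagonal components. By \autoref{lem:properties-of-D_A} \eqref{lemitem:all-diagonals} and \eqref{lemitem:no-diagonal}, the four vertices $(i,i)$ form a single connected component of $D_A$, and every other component is disjoint from the diagonal; the same holds for $D_{A\inv}$. Since $G$ has no element of order $2$ it has no element of order $4$, and an off-diagonal component of size $4$ would, by \autoref{lem:properties-of-D_A} \eqref{lemitem:no-same-axis} and \eqref{lemitem:no-symmetric-connections}, be a $4$-cycle of cells forcing its common value $g$ to satisfy $g^4=e$ with $g\neq e$, so that $g$ or $g^2$ has order $2$ — impossible; hence by \autoref{lem:clique-impossible} every off-diagonal component of $D_A$ or $D_{A\inv}$ has size $1$, $2$, or $3$. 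Let $(p,q,r)$ count the off-diagonal components of $D_A$ of sizes $3,2,1$, and $(p',q',r')$ the same for $D_{A\inv}$; as there are $16-4=12$ off-diagonal vertices, $3p+2q+r=12=3p'+2q'+r'$. The bijection $\phi$ carries diagonal-component edges to loops and off-diagonal loops to diagonal-component edges, and hence restricts to a bijection between the off-diagonal non-loop edges of $D_A$ and those of $D_{A\inv}$ (this is exactly the off-diagonal part of $\Lambda(A,A\inv)=\Lambda(A\inv,A)$); counting the $k(k-1)$ non-loop directed edges in a size-$k$ clique gives $9p+4q+r=9p'+4q'+r'$, so $3p+q=3p'+q'$. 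Since $c(D_A)=1+p+q+r$ with $r=12-3p-2q$, a short computation using $3p+q=3p'+q'$ yields $c(D_A)-c(D_{A\inv})=p-p'$. Thus it suffices to prove $p=p'$, i.e.\ that $D_A$ and $D_{A\inv}$ have the same number of triangles (size-$3$ components).

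I would prove the sharper statement that $D_A$ and $D_{A\inv}$ have literally the same triangles, regarded as three-element vertex sets. Let $T=\{(i_1,j_1),(i_2,j_2),(i_3,j_3)\}$ be a triangle of $D_A$ with common value $g=a_{i_k}a_{j_k}\inv$, so $a_{i_k}=g\,a_{j_k}$ for each $k$. The partial map $A\to A$, $s\mapsto gs$ (defined whenever $gs\in A$), has in-degree and out-degree at most $1$ at every element, so its functional graph on the four elements of $A$ is a disjoint union of directed paths and cycles, with no loop or $2$-cycle because $g\neq e$ and $g^2\neq e$. The three relations $a_{i_k}=g\,a_{j_k}$ are three distinct edges of this graph; since $|A|=4$, they cannot be split among two or more components (two edges already force a $3$-vertex path and any further edge another component with at least $2$ vertices, exceeding $|A|=4$), so all three lie in one path or cycle. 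Along such a path or cycle consecutive elements differ by left multiplication by $g$, so all elements it touches — in particular $a_{j_1},a_{j_2},a_{j_3}$ — lie in a single left coset $\pv{g}\,y$ of $\pv{g}$. Writing $a_{j_k}=g^{m_k}y$, we get $a_{i_k}\inv a_{j_k}=a_{j_k}\inv g\inv a_{j_k}=y\inv g\inv y$ for every $k$, so the three (still off-diagonal) cells of $T$ are pairwise adjacent in $D_{A\inv}$; as off-diagonal components of $D_{A\inv}$ also have size at most $3$, $T$ is a triangle of $D_{A\inv}$. Running the same argument with $A$ replaced by $A\inv$ gives the reverse inclusion, so the triangle sets coincide, $p=p'$, and $c(D_A)=c(D_{A\inv})$.

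The hard part is recognizing that $\phi$, though an edge bijection, does \emph{not} by itself pin down the triangle count: on small examples (e.g.\ $A=\{x,xg,xg^2,xg^3\}$ with $g$ of infinite order) one checks that $\phi$ sends some edges of a triangle to non-triangle edges, so no naive edge-counting argument can work. What rescues the count is the rigidity of the second paragraph — that a size-$3$ component is so constrained that the elements of $A$ it involves must lie in one coset of a cyclic subgroup — and this is exactly where the hypothesis that $G$ has no element of order $2$ (hence none of order $4$) is used, both to rule out size-$4$ components and to kill $2$-cycles in the functional graph; one should also double-check that the cases where $g$ has order $3$ or $5$, in which a triangle can coexist with further triangles, stay consistent with $p=p'$.
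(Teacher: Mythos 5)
Your proof is correct, but it takes a genuinely different route from the paper's. The paper classifies the triangles up to relabeling --- either $(1,2)(2,3)(3,4)$ when four indices occur, or the cyclic $(1,2)(2,3)(3,1)$ when three do --- and then draws $D_A$ and $D_{A^{-1}}$ explicitly in each case (including the possible companion triangle $(3,1)(4,2)(1,4)$), checking by inspection that the component counts agree. You instead (i) reduce the comparison to the number $p$ of off-diagonal size-$3$ components, using the restriction of $\phi$ to non-loop off-diagonal edges together with the vertex count $3p+2q+r=12$, and (ii) prove the sharper structural fact that $D_A$ and $D_{A^{-1}}$ have \emph{literally the same} triangles: the elements of $A$ occurring in a triangle with common value $g$ lie in one left coset $\pv{g}y$, so all three cells share the value $y^{-1}g^{-1}y$ in $D_{A^{-1}}$, and your $g^4=e$ exclusion of off-diagonal $K_4$'s (the paper derives the same exclusion differently in the proof of \autoref{theor:size-of-A}) forces the resulting component to be exactly that triangle. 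Your route buys a relabeling-free, picture-free argument and a stronger invariant; in particular your closing worry about $g$ of order $3$ or $5$ is unnecessary, since the argument is applied to each triangle individually and so handles coexisting triangles automatically. The paper's route buys concreteness. Two small blemishes, neither a gap: a size-$k$ clique has $k(k-1)=6,2,0$ non-loop directed edges, not $9,4,1$ (those counts include loops, which $\phi$ does \emph{not} preserve --- it sends them to diagonal edges); the identity $9p+4q+r=9p'+4q'+r'$ nevertheless holds because the $12$ off-diagonal loops match on both sides, and either bookkeeping yields $3p+q=3p'+q'$, which is all you use. Also, like the paper, you never actually need the hypothesis that the largest clique is $K_4$; you re-derive the relevant bound from the no-order-$2$ assumption.
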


\begin{proof}
Suppose $D_A$ contains a $K_4$ clique. Then, up to relabeling and transposition, this clique has the form $(1,2)(2,3)(3,4)(4,1)$. Indeed, given a $K_4$ of the form $(1,j_1)(2,j_2)(3,j_3)(4,j_4)$, we may assume $j_1=2$ and $j_2=3$ up to relabeling. This forces $j_4=1$ and $j_3=4$. This clique is impossible if $G$ has no elements of order $2$ since $a_1a_3^{-1}=(a_1a_2^{-1})(a_2a_3^{-1})=(a_3a_4^{-1})(a_4a_1^{-1})=a_3a_1^{-1}$ and this contradicts Lemma \ref{lem:properties-of-D_A} \eqref{lemitem:no-symmetric-connections} given the assumption that $G$ has no elements of order $2$.

It remains to consider the case where the largest clique is a $K_3$, which we call a triangle. Consider a triangle $\triangle= (\alpha_1,\beta_1)(\alpha_2,\beta_2)(\alpha_3,\beta_3)$. By Lemma \ref{lem:properties-of-D_A} \eqref{lemitem:no-same-axis}, we know that the same number can appear at most twice in the coordinates \(\alpha_1\), \(\beta_1\), \(\alpha_2\), \(\beta_2\), \(\alpha_3\), and \(\beta_3\). Moreover the same number appears at most once for an \(\alpha_i\) and at most once for a \(\beta_j\), where \(1\le i,j\le 3\). Therefore, there are two cases to consider.\\

\noindent
\textbf{Case 1: There are 4 distinct elements in $\triangle$.}
Because of the condition that there is no element of order 2 in $G$, there cannot be an edge between $(i,j)$ and $(j,i).$ Therefore, up to relabeling, $\triangle=(1,2)(2,3)(3,4)$. By elementary group operations, the edge $[(1,3),(2,4)]$ and its transpose are also in $D_A$. Hence, $D_A$ and $D_{A^{-1}}$ look as in Figure \ref{fig:case 1.1}. Therefore, in this case, the number of connected components stays the same.
\begin{figure}[h]
\begin{center}
    \begin{tikzpicture}[scale=1.0]
    \foreach \x in {0,...,3}{
        \foreach \y in {0,...,3}{
            \node (\x\y) at (\x,\y) {};
            \filldraw (\x\y) circle [fill=black,radius=2pt] {};
        }
    }

    \foreach \x in {4,...,7}{
        \foreach \y in {0,...,3}{
            \node (\x\y) at ({(\x + 1)},\y) {};
            \filldraw (\x\y) circle [fill=black,radius=2pt] {};
        }
    }

    \foreach \x in {0,...,2} {
        \draw [-,dotted] (\x,\x) -- ({\x+1},{\x+1});
        \draw [-,dotted] (\x+5,\x) -- ({\x+6},{\x+1});
    }

    \draw [-] (0,1) to [bend left=20] (2,3);
    \draw [-] (0,1) -- (1,2) -- (2,3);
    \draw [-] (1,0) -- (2,1) -- (3,2);
    \draw [-] (1,0) to [bend right=20] (3,2);

    \draw [-] (0,2) -- (1,3);

    \draw[-] (2,0) -- (3,1);

    \draw [-] (0+5,1) to [bend left=20] (2+5,3);
    \draw [-] (0+5,1) -- (1+5,2) -- (2+5,3);
    \draw [-] (1+5,0) -- (2+5,1) -- (3+5,2);
    \draw [-] (1+5,0) to [bend right=20] (3+5,2);

    \draw [-] (0+5,2) -- (1+5,3);

    \draw[-] (2+5,0) -- (3+5,1);

    \draw[->] (3.5,1.5) -- (4.5,1.5)
        node [above, text centered, midway] {\(\phi\)};
\end{tikzpicture}
\end{center}
 \caption{Up to relabeling, $\triangle=(1,2)(2,3)(3,4)$ induces $D_A$ and $D_{A^{-1}}$ above.}
    \label{fig:case 1.1}
\end{figure}

 Additionally there may be a triangle $\triangle' = (3,1)(4,2)(1,4)$ and its
reflection under $T$. However, we can check that the image of this graph under $\phi$
is itself as in Figure \ref{fig:case1.2} below.
\begin{figure}[h]
\begin{center}
    \begin{tikzpicture}[scale=1.0]
    \foreach \x in {0,...,3}{
        \foreach \y in {0,...,3}{
            \node (\x\y) at (\x,\y) {};
            \filldraw (\x\y) circle [fill=black,radius=2pt] {};
        }
    }

    \foreach \x in {4,...,7}{
        \foreach \y in {0,...,3}{
            \node (\x\y) at ({(\x + 1)},\y) {};
            \filldraw (\x\y) circle [fill=black,radius=2pt] {};
        }
    }

    \foreach \x in {0,...,2} {
        \draw [-,dotted] (\x,\x) -- ({\x+1},{\x+1});
        \draw [-,dotted] (\x+5,\x) -- ({\x+6},{\x+1});
    }

    \draw[-] (0,1) to [bend left=20] (2,3);
    \draw[-] (0,1) -- (1,2) -- (2,3);
    \draw[-] (1,0) -- (2,1) -- (3,2);
    \draw[-] (1,0) to [bend right=20] (3,2);

    \draw [-] (0,2) -- (1,3) -- (3,0) -- cycle;

    \draw[-] (2,0) -- (3,1) -- (0,3) -- cycle;

    \draw[-] (0+5,1) to [bend left=20] (2+5,3);
    \draw[-] (0+5,1) -- (1+5,2) -- (2+5,3);
    \draw[-] (1+5,0) -- (2+5,1) -- (3+5,2);
    \draw[-] (1+5,0) to [bend right=20] (3+5,2);

    \draw[-] (0+5,2) -- (1+5,3) -- (3+5,0) -- cycle;

    \draw[-] (2+5,0) -- (3+5,1) -- (0+5,3) -- cycle;

    \draw[->] (3.5,1.5) -- (4.5,1.5)
        node [above, text centered, midway] {\(\phi\)};
\end{tikzpicture}
\end{center}
 \caption{ Up to relabeling, $\triangle'=(3,1)(4,2)(1,4)$ induces $D_A$ and $D_{A^{-1}} $ above.}
    \label{fig:case1.2}
\end{figure}

\vspace{8pt} \noindent
\textbf{Case 2: There are 3 distinct elements in $\triangle$.}
The only possibility in this case is that up to relabeling, $\triangle = (1,2)(2,3)(3,1).$ Notice that this triangle cannot be part of a $K_4$ because $(4,4)$ is not in the same connected component as $\triangle$ and all other coordinates will cause a contradiction with Lemma \ref{lem:properties-of-D_A} (\ref{lemitem:no-same-axis}).
Then, $D_A$ and $D_{A^{-1}}$ are as in Figure \ref{fig:case 2} below.

\begin{figure}[h]
    \centering
     \begin{tikzpicture}[scale=1.0]
    \foreach \x in {0,...,3}{
        \foreach \y in {0,...,3}{
            \node (\x\y) at (\x,\y) {};
            \filldraw (\x\y) circle [fill=black,radius=2pt] {};
        }
    }

    \foreach \x in {4,...,7}{
        \foreach \y in {0,...,3}{
            \node (\x\y) at ({(\x + 1)},\y) {};
            \filldraw (\x\y) circle [fill=black,radius=2pt] {};
        }
    }

    \foreach \x in {0,...,2} {
        \draw [-,dotted] (\x,\x) -- ({\x+1},{\x+1});
        \draw [-,dotted] (\x+5,\x) -- ({\x+6},{\x+1});
    }

    \draw[-] (0,1) -- (1,2) -- (2,0) -- cycle;
    \draw[-] (1,0) -- (2,1) -- (0,2) -- cycle;

    \draw[-] (0+5,1) -- (1+5,2) -- (2+5,0) -- cycle;
    \draw[-] (1+5,0) -- (2+5,1) -- (0+5,2) -- cycle;

    \draw[->] (3.5,1.5) -- (4.5,1.5)
        node [above, text centered, midway] {\(\phi\)};
\end{tikzpicture}
    \caption{Up to relabeling, $\triangle=(1,2)(2,3)(3,1)$ induces $D_A$ and $D_{A^{-1}}$ above.}
    \label{fig:case 2}
\end{figure}

\noindent
In the second case, we see it is impossible to place a second triangle
without two vertices being on the same axis, violating Lemma \ref{lem:properties-of-D_A} \eqref{lemitem:no-same-axis}.
\end{proof}

\begin{proof}[Proof of Theorem \ref{theor:size-of-A}]
    If \(|AA\inv| \neq |A\inv A|\), then the number of connected
    components in \(D_A\) is different from \(D_{A\inv}\).
    We claim at least one of \(D_A\) and \(D_A\inv\) contains a clique of size 3 or greater off the main diagonal (a non-diagonal clique). Indeed, since \(\phi\) is a bijection between the
    non-diagonal and self-loop edges,
    the number of connected components in \(D_A\) would be the same as
    \(D_{A\inv}\) if there were no non-diagonal connected components of size 3 or greater.

    Assume \(D_A\) has a non-diagonal clique of size 3 or greater.
    This is impossible if \(|A| \in \ps{1,2}\)
    by Lemma \ref{lem:clique-impossible}. If
    \(|A| = 3\), then the only possible triangles are
    \((1,2)(3,1) (2,3)\) and \((2,1)(1,3) (3,2)\). Since
    any \(D_A\) is invariant under transposition by \(T\), if \(D_A\) has
    a triangle, then it has both. Any other edge would either
    (1) connect the triangles to the diagonal, which is forbidden,
    or (2) connect the triangles to each other, which creates a component with more
    \(3\) vertices, contradicting Lemma \ref{lem:clique-impossible}. Thus, with $|A| = 3$, $D_A$ and $D_{A^{-1}}$ must have the same component counts. So, we have shown for any group $G$ if $|A| \le 3$ then $|AA\inv|=|A\inv A|$.

    Notice how the above proof does not use Lemma \ref{lem:properties-of-D_A} \eqref{lemitem:no-symmetric-connections}, so it holds for any group, yielding the first part of the theorem.

Next, suppose that $G$ has no elements of order $2$.
If $|A| = 4$, then Lemma \ref{lem:triangle-invariant} shows that $\phi$ preserves the number of connected components. Hence, we have shown if $G$ is a group with no elements of order 2 and $|A| \le 4$, then $|AA\inv| = |A\inv A|$.
\end{proof}

\begin{example}\label{ex:qdih-group-A-4}
If $G$ has elements of order $2$, then we may no longer use Lemma \ref{lem:properties-of-D_A} \eqref{lemitem:no-symmetric-connections} and proof of Lemma \ref{lem:triangle-invariant} fails. Indeed, we may check with Sage \cite{sagemath} that the set
\[
    A \ = \ \ps{(15)(26)(37)(48),\ (1256)(3874),\ (17)(35)(48),\ (18765432)} \ \subseteq \ S_8,
\]
which can be viewed as a subset of the quasidihedral group of order $16$,
has $|AA^{-1}| = 10$, but $|A^{-1}A| = 7$. This simultaneously shows
we can have an odd difference in the left and right quotient sets and also
    that $|A| = 4$ can achieve a nonzero difference.
\end{example}

\section{Future Work}

In a finite subset \(S\) of a group \(G\), one may ask how many finite subsets \(A\subseteq S\) have the left quotient set larger than the right quotient set. If $S$ is symmetric (i.e., $w\in S$ if and only if $w^{-1}\in S$), the number of subsets with the left quotient set larger than the right equals the number of subsets with the right quotient set larger than the left. Indeed, if
\(|A\inv A| > |AA\inv|\), then we can replace \(A\) with \(B\coloneqq A\inv\)
to get \(|B\inv B| < |BB\inv|\), and vice-versa. In particular, this implies
that the quantity
\[
    \mathbb{E}[|A\inv A|-|AA\inv|] \ ~=~ \  0,
\]
where $A\subseteq S$ is chosen by including elements at random with probability $1/2$. On the other hand, the variance
\[
    \mathrm{Var}(|A\inv A|-|AA\inv|) \ ~=~ \  \mathbb{E}\left[(|A\inv A|-|AA\inv|)^2\right]
\]
is nonzero for groups where \(|AA\inv| \neq |A\inv A|\) for some finite
subset \(A\subseteq S\). This suggests the following question in the free group $F_2$.

\begin{question}
    Let \(B_N \subseteq F_2\) be the set of words of length no greater than \(N\).
    With respect to the uniform probability measure on the subsets of
    \(B_N\), what is \(\mathrm{Var}(|A\inv A| - |AA\inv|)\)?
\end{question}

When studying the moments of $|AA^{-1}|$, one must handle dependence between the event $x\in AA^{-1}$ and the event $y\in AA^{-1}$. A technique for handling this dependence is via constructing the graph $C$ with vertex set $V(C)=S$ and edge set $E(C)=\{(u,v)\in S\times S\mid uv^{-1}=w_m\text{ or } vu^{-1}=w_m\text{ for some }m=1,\dots,k\}$. Then, $\mathbb{P}(w_1,\dots,w_k\notin AA^{-1})$ is the probability that $A$ is chosen to be a vertex cover of $C$. This technique is described in \cite{MissingSumsInSumset} for finite sets of integers. 



There are other unanswered questions about the values \(|AA^{-1}|-|A^{-1}A|\) can take in various groups.
\begin{question}[Answered in \cite{mouli2025classificationfinitegroupsequal} and \cite{Herzog-Kaplan-Longobardi-Maj}]\label{question: necessary and sufficient for A in G MLTR}
    What are the necessary and sufficient conditions on a group \(G\)
    so that there exists a finite subset \(A \subseteq G\) such that
    \(|A\inv A|\neq|AA\inv|\)?
\end{question}

Of course, $G$ being non-abelian is a necessary condition for $|A^{-1}A|\neq|AA^{-1}|$. However, one can check that $|AA^{-1}|=|A^{-1}A|$ for all subsets $A$ of the symmetric group $S_3$. Hence, being non-abelian is not a sufficient condition. 


\begin{question}[Answered in \cite{mouli2025classificationfinitegroupsequal} and \cite{Herzog-Kaplan-Longobardi-Maj}]\label{question: infinite family non-isomorphic}
    Is there an infinite family of finite, non-isomorphic,
    non-abelian groups \(\{G_i\}\) for which
    \(|A\inv A| \ = \ |AA\inv|\) for all subsets $A$ of $G_i$?
\end{question}

Following the initial draft of this paper, the sixth author answered Questions \ref{question: necessary and sufficient for A in G MLTR} and \ref{question: infinite family non-isomorphic} in the preprint \cite{mouli2025classificationfinitegroupsequal}. Shortly following this preprint, the authors of the present paper were directed to \cite{Herzog-Kaplan-Longobardi-Maj} which answered the questions previously and independently. Both answer the question by providing a classification of groups $G$ for which $|AA^{-1}|=|A^{-1}A|$ for all $A\subseteq G$. Question \ref{question: infinite family non-isomorphic} is answered affirmatively with the Hamiltonian $2$-groups $Q_{8}\times(C_2)^n$ being the unique infinite family of groups where $|AA^{-1}|=|A^{-1}A|$ holds for all subsets.

This classification appears as \cite[Theorem 7.4.]{Herzog-Kaplan-Longobardi-Maj} for all finite and infinite groups and as \cite[Theorem 1.1.]{mouli2025classificationfinitegroupsequal} where the classification is only for finite groups. We are grateful to Liubomir Chiriac for pointing us to \cite{Herzog-Kaplan-Longobardi-Maj} after reading a previous draft of this paper. Given that Questions \ref{question: necessary and sufficient for A in G MLTR} and \ref{question: infinite family non-isomorphic} are solved, we are interested in the following extension. Define the family of functions on groups $\{f_n\}_{n\geq1}$ given by $f_n(G)=\sup_{A\subseteq G,|A|\leq n}(|AA^{-1}|-|A^{-1}A|)$.
\begin{question}\label{question: conditions for f<c}
    Fix $c\geq 0$. What are the necessary and sufficient conditions on a group $G$ such that $f_n(G)\leq c$ for $n\geq 0$?
\end{question}

\begin{question}
    For what groups $G$ does $f_n(G)\to\infty$ as $n\to\infty$?
\end{question}

The $c=0$ case of Question \ref{question: conditions for f<c} is exactly Question \ref{question: necessary and sufficient for A in G MLTR} while the case $c>0$ asks for the maximal difference $|AA^{-1}|-|A^{-1}A|$ when $A\subseteq G$ is a subset of size $n$. We believe these questions could be a productive area for future research.

\newpage
\noindent
\textbf{Acknowledgements.}

\noindent The authors gratefully acknowledge the referee's careful reading of the manuscript and their constructive comments. The authors also thank Carl Pomerance and Mel Nathanson for stimulating discussions; the genesis of this work came from conversations the fifth-named author had with them at the Integers Conference in Georgia in 2025.
This project took place at the SMALL REU program at Williams College and was funded by the Finnerty Fund from Williams College and the National Science Foundation (Grant DMS2241623). The authors are grateful for the support of Amherst College, Princeton University, the University of Michigan, the University of Wisconsin, and Williams College.

\bibliographystyle{alpha}

\newcommand{\etalchar}[1]{$^{#1}$}

\end{document}